\def\1{{\bf 1}}
\def\R{\mathbb{R}}
\newtheorem{thm}{Theorem}
\newtheorem{lem}[thm]{Lemma}
\newtheorem{prop}[thm]{Proposition}
\newcommand {\fn}{\begin{footnotesize}}
\newcommand {\en}{\end{footnotesize}}
\title{\LARGE \bf
Nonuniform Line Coverage from  Noisy Scalar Measurements
}
\author{Peter Davison, Naomi Ehrich Leonard, Alex Olshevsky, Michael Schwemmer\thanks{%
Supported in part by ONR grant N00014-09-1-1074.  P. Davison is with the Dept.\ of Aeronautics and Astronautics, MIT, {\tt\small pdavison@mit.edu}.   N.E. Leonard is with the Dept.\ of Mechanical and Aerospace Engineering, Princeton University, {\tt\small naomi@princeton.edu}.  A.
Olshevsky is with the Department of Industrial and Enterprise Systems Engineering, University of Illinois at Urbana-Champaign, {\tt\small aolshev2@illinois.edu}. M. Schwemmer is with the Mathematical Biosciences Institute, Ohio State University, {\tt\small schwemmer.2@mbi.osu.edu}, and is supported in part by the MBI and the NSF under grant DMS 0931642.  }
}
\begin{document}

\maketitle
\thispagestyle{empty}
\pagestyle{empty}

%%%%%%%%%%%%%%%%%%%%%%%%%%%%%%%%%%%%%%%%%%%%%%%%%%%%%%%%%%%%%%%%%%%%%%%%%%%%%%%%
\begin{abstract} We study the problem of distributed coverage control in a network of mobile agents arranged on a line. The goal is to design distributed dynamics for the agents to achieve optimal coverage positions with respect to a scalar density field that measures the relative importance of each point on the line. Unlike previous work,
which implicitly assumed the agents know this density field, we only assume that each agent can access noisy samples of the field
at points close to its current location.  We provide a simple randomized protocol 
wherein every agent samples the scalar field at three nearby points at each step and which guarantees convergence to the optimal positions. We further analyze the convergence time of this protocol and show that, under suitable assumptions, the squared distance to the 
optimal coverage configuration decays as
$O(1/t)$ with the number of iterations $t$, where the constant scales polynomially with the number of agents $n$.  We illustrate these
results with simulations.
\end{abstract}

%%%%%%%%%%%%%%%%%%%%%%%%%%%%%%%%%%%%%%%%%%%%%%%%%%%%%%%%%%%%%%%%%%%%%%%%%%%%%%%%

\section{Introduction}
As technological advances have improved the capabilities, reliability, and cost of robotic sensing platforms, their potential for deployment in autonomous, cooperative networks has gained significant attention. The emergent capabilities of  mobile sensor networks promise to revolutionize  complex tasks  such as surveillance, exploration and environmental monitoring. However, development of   high-level capabilities requires solutions to  lower-level problems such as formation control and coverage control, and these solutions should be distributed, adaptive to changing environments, and robust to uncertainty and changes in network topology.

The present work focuses on coverage control, where the goal is to optimally locate the nodes, or agents, to maximize the so-called coverage metric, which 
measures the largest distance from a point in a domain of interest to the closest node. The coverage problem may be thought of in terms
of interception time: the optimal coverage locations in a domain minimize the largest ``response time'' from the node locations to a point in the domain.

The coverage problem often  involves distances which differ from the ordinary Euclidean 
metric by a weight factor which puts a heavier weight on some regions relative to others; in this case, the problem is referred to as the {\em nonuniform} coverage control problem, whereas if the distances are Euclidean then  
it is standard to refer to the problem as the {\em uniform} coverage problem.  The weight factor of each point is usually referred to as the density field. Intuitively, one may think of minimizing interception time in a terrain of varying roughness, where different regions of the terrain take longer to traverse compared to others. In a nonuniform field, agents should be closer together in regions of higher density and more spread out in regions of lower density.

We will consider a particular case of the nonuniform coverage problem when the agents are
arranged on a line. Our interest in the line coverage problem  is motivated by two distinct considerations. 

First, the line coverage problem is the natural model for the ``border patrol'' problem, wherein
we must position $n$ nodes on a curve in $\R^2$ such that the line  distance  from any point 
on the line to the closest node is minimized. After a reparametrization of the curve, this is exactly the line coverage
problem. In this context, the curve usually represents a physical border; the $n$ nodes will usually be autonomous vehicles; the distance metric being optimized is  the
largest interception time from any point on the border to the closest vehicle; and the density of a location represents difficulty of travel, determined by the roughness of the terrain in that location. 

The terrain roughness can be measured by each vehicle at its
current location using a laser stripe generator (see \cite{zhang}). Correspondingly, we seek an adaptive protocol by means of 
which $n$ vehicles can explore the border and optimally position themselves without knowing the terrain roughness in advance.  Since the optimal position will depend on the terrain roughness at all points on the border, any protocol for this problem will need to ensure that the vehicles do not neglect any part of the border in their sampling. Consequently, we will study algorithms wherein each vehicle repeatedly samples the terrain density in the region of the border closer to it than to other agents (thus ensuring no part of the border is neglected) and, based on these samples and the positions of its neighbors, moves  to a new location. 

A related application is  adaptive sampling applications in two and three dimensions. There is often one dimension where nonuniformity dominates and a protocol  is needed for coverage in the dominant direction; e.g.,  in the ocean autonomous vehicles measuring temperature would use the protocol to optimally position themselves along the thermocline in the vertical water column.  
%More broadly, we expect that as 
%sensor networks become more ubiquitous in surveillance and environmental monitoring, such protocols will be required with increasing
%frequency.  

%motivated by several applications. One is
%the problem of border patrol, in which a collection of mobile sensors must adaptively spread
%themselves out over a one-dimensional curve; after a re-parametrization, this is the line
%coverage problem. Another application is adaptive sampling in two
%and three-dimensional environments where the nonuniformity in the sampled field is dominant
%in one dimension. For instance, in the air or in the water, nonuniform coverage in the vertical
%is a typical concern: vertical profilers or autonomous vehicles should distribute themselves over
%depth or altitude, while vertical lines of these sensors can be deployed in the horizontal plane in
%a static array or adapted using a distributed uniform coverage algorithm. An important example
%in the vertical water column (oceans, lakes, reservoirs, rivers) is mapping the nonuniform vertical
%temperature field and locating the thermocline, a thin layer that separates the warm surface layer
%from the cold deep layer; the depth of the thermocline is relevant in many applications, for
%instance, it is the depth at which the speed of sound is often maximal.

Secondly, the general  coverage control 
problem  has been the subject of much recent interest within the control community; we refer the reader to the recent papers \cite{Cortes2004, Cortes2005, Poduri2004, Li2005, Martinez2007, Gao2008, Carlos2008, Susca2008,  Laventall2009, Leonard2009, Cortes2010,  cortes-line, LOpaper,Obermeyer2011, Pavone2011, Bullo2012, Sydney2012, peter-allerton} and the references therein. The nonuniform line coverage problem is the one-dimensional version of the general coverage control problem, and consequently, it provides a simplified setting  to make advances in addressing outstanding questions in coverage control.  

Due to the large amount of related literature, we will only attempt to survey the works most directly relevant to our problem and approach.  In \cite{Cortes2004,Cortes2005} a distributed uniform coverage control law is developed which makes use of Voronoi partitions and gradient descent laws.   The nonuniform case is treated in part by using density-dependent gradient descent laws with the Voronoi partitions computed for the uniform case.   Coverage with communication constraints is treated in \cite{Poduri2004,Li2005}.   
The nonuniform coverage control problem is addressed in \cite{Leonard2009} where a density-dependent distance metric is defined that stretches and shrinks subregions of high and low density. A cartogram transformation (which needs to be known be the agents) is then used to compute the Voronoi partitions and convergence to optimal nonuniform coverage is proved in the case of a static or slowly time-varying density field.  
% When the density field is defined by mapping uncertainty, it changes with every measurement taken;  nonuniform coverage for this reactively changing field is treated in the case of %symmetric domains in \cite{Carlos2011}. 
The necessity of knowing the cartogram transformation is relaxed in the case of nonuniform coverage control on the line in \cite{LOpaper} where fully distributed nonuniform coverage protocols are derived. A follow-up work \cite{peter-allerton} considers the line coverage problem where only samples of $\rho$ are available and explores the performance guarantees associated with some simple strategies. Some recent work \cite{cortes-line, julien-line} considers strategies for  coverage on the line when some proportion of the sensors are expected to randomly fail.

In this work our focus is on deriving rigorously correct protocols for optimal line coverage under the assumption that every node has access to noisy measurements of the 
field $\rho$. This is a considerably weaker assumption compared to the previous literature (e.g., \cite{Cortes2004, Cortes2005, Leonard2009, LOpaper}) where convergence to optimal coverage was established with update rules that involved exact computation of integrals of $\rho$, thus implicitly assuming
that the density is known to all the nodes.  Our main result is a randomized protocol that drives all the agents to the optimal configuration
from only three noisy samples of $\rho$ at each step. Moreover, we derive upper bounds on the convergence time of our protocol that scale polynomially with the number
of nodes $n$. We perform  simulations that show the convergence times of our protocol are quite reasonable, scaling considerably 
faster than our worst-case bounds. 

The paper is structured as follows. In Section \ref{sec:background} we provide the necessary background to state the problem formally and briefly
summarize our main results. Section \ref{sec:main} then contains our protocol and a proof of its convergence and convergence rate.  We illustrate these results with simulations  in Section \ref{sec:simulations}.

\section{Background and summary of our results}\label{sec:background}
We will consider a network of $n$ mobile agents whose locations are assumed, for simplicity, to lie within the interval $[0,1]$. We will denote the positions of these agents, or nodes, at time step $t$ by $x_1^{(t)},x_2^{(t)},\hdots,x_n^{(t)}$. We adopt the convention that the labeling of agents from $1$ to $n$ matches their initial order at time step $0$ along the line from left to right, i.e., $x_1^{(0)} \leq x_2^{(0)} \leq \cdots \leq x_n^{(0)}$.  We will use $x_0^{(t)}$ and $x_{n+1}^{(t)}$ to denote $0$ and $1$ for all $t$, respectively; this will simplify  notation.

The information density field, $\rho: \R \rightarrow (0,\infty)$ is assumed to be a positive differentiable function which is  bounded above and bounded away from zero from below.
%\footnote{While is more natural to define $\rho$ to have domain $[0,1]$, defining $\rho$ to have domain $\mathbb{R}$ as we have done leads to slightly %less cumbersome theorem statements as we do not have to make special provisions regarding the behavior of $\rho$ at the endpoints $0$ and $1$.}.  The %derivative is denoted by $\rho'$.
Following \cite{Leonard2009}, we define the distance between two points $a,b$ as 
$$
d_\rho(a,b)=\int_{\min(a,b)}^{\max(a,b)}\rho(z) ~dz.
$$  Intuitively  this distance function stretches regions of high $\rho$ relative to regions of lower $\rho$.
Using this notion of distance, the coverage metric $\Phi$ is then the largest distance from any point in the domain $[0,1]$ to the agent that is nearest to it:
\vspace{-0.05truein}
$$
\Phi(x_1,\hdots,x_n,\rho)=\max_{y\in[0,1]}\bigg[\min_{i=1,\hdots,n}d_\rho(y,x_i)\bigg].
$$ The optimum coverage $\Phi^*$ is the minimum of $\Phi$ over all possible agent configurations in the interval $[0,1]$.  It is not 
hard to see (and was proven in \cite{LOpaper}) that for any $\rho$ satisfying the above assumptions, $\Phi$ is minimized at a
unique vector $x = x^*$ among vectors with nondecreasing entries, i.e., those satisfying $x_1 \leq x_2 \leq \cdots \leq x_n$. 

The nonuniform line coverage problem is to design a protocol that drives all the agents to a configuration achieving coverage $\Phi^*$
$\Phi^*$. Furthermore, this protocol must be distributed, meaning that each agent is limited to repeatedly updating its position based only
on the positions of its closest neighbors on the left and right as well as measurements of the density $\rho$ taken near its location. The
special case when $\rho$ is identically equal to $1$ is referred to as the uniform line coverage problem. 

It is easy to see that the optimal coverage configuration is invariant under any scaling of the density field $\rho$. We will therefore assume henceforth,
without loss of generality,  that the range of $\rho$ is $[1,\rho_{\rm max}]$ for some finite positive $\rho_{\rm max} \geq 1$.

A variety of protocols for the line coverage problems are available (see \cite{Cortes2004,Cortes2005, Leonard2009} and the 
recent paper \cite{LOpaper} focused on the line coverage problem). However,
all of these works implicitly assume that the agents know $\rho$ exactly because they include integrals of $\rho$ in the update equations used by each agent. By contrast, in this
paper we instead assume that each agent only has access to {\em noisy} samples of $\rho$. Specifically, we will
assume that every agent can take samples of the form $\widehat \rho(z) = \rho(z) + w$, where $w$ is noise and $z$ is a point between the measuring agent's  left and right neighbors.   We remark that this assumption may involve some physical travel on the part of each node at every step; for example, if it
chooses to sample $\rho$ at a point outside its physical sensing radius (but still between its left and right neighbors), it will need to move closer to that  point.  The noises $w$ are assumed to be independent, have zero mean and bounded support, and an upper bound on this support, which we will 
denote by $M$, is known to all the nodes. 
Furthermore, we will assume that $\widehat{\rho}(z)$ is nonnegative with probability $1$; this occurs, for example, if the noise support is not too large. Intuitively, since the density $\rho(z)$ represents difficulty of travel (and is therefore nonnegative) we require noisy estimation of it to result in nonnegative samples. 

Our main contribution in this paper is a protocol for the  nonuniform line coverage problem wherein each node uses only three samples of
the density at each step. 
%This is to be contrasted with the more naive approach which attempts to discretize the 
%previously known protocols by taking more and more samples at each step. 
Moreover, we obtain precise bounds on the convergence 
speed of our protocol under the additional assumption that every node knows a rough estimate of the total number of nodes in the
network. IWe will show that the per-node expected square distance from optimal coverage configuration decays as $O(n^5/t)$, where the constant depends on the  quantity $\rho_{\rm max}$ as well as on the support of the noise distribution $M$.

\section{Nonuniform line coverage from noisy scalar measurements\label{sec:main}}

We next describe the line coverage protocol we present and analyze here. We begin with an informal sketch. We will show that optimal coverage configurations can be characterized as the minima of a certain Lyapunov function, and the first-order conditions for optimality prescribe that the $\rho$-weighted distances between each agent and each of its two closest neighbors be in a certain proportion.  This naturally suggests an algorithm wherein each agent repeatedly moves to put theses distances in the right proportion. However, without exact knowledge of $\rho$, the nodes estimate the current $\rho$-weighted distances through random sampling. In particular, each node will sample $\rho$ at three  points, one between itself and its right neighbor, one
between itself and its left-neighbor, and one at its current location. These samples of $\rho$ allow it to compute an unbiased estimate of where its next location should be 
in order to put the two distances into the desired proportion\footnote{In line with previous protocols for coverage control which relied on computation of Voronoi partitions (e.g., \cite{Cortes2004, Cortes2005, Leonard2009}), this process may be thought of in terms of each node computing its Voronoi cell (interval) from a few noisy samples at each stage.}.

However, because all the estimates obtained by the nodes in this way are noisy, we will introduce a stepsize; thus nodes will move only part-way towards their
new positions, and the size of the move at each step will (slowly) decay to zero with time. Intuitively,  the position of each node is influenced by all previous samples of $\rho$ collected by it and neighboring nodes, and consequently as time goes on and the node's position reflects more
and more past samples, the node will need to move less in response to each new sample. 

%The protocol we construct along these lines is therefore a version of noisy gradient descent with diminishing stepsize on a certain Lyapunov function. The difficulty is that 
%this Lyapunov function turns out to be not convex. Nevertheless, we show that there is considerable structure within the coverage control problem to obtain convergence time
%bounds for the protocol we develop. 

\subsection{A formal statement of the protocol}

At time step $t$, node $k$ will sample the density $\rho$ at a random point $r_k^{(t)}$ uniformly in the interval $[x_k^{(t)}, x_{k+1}^{(t)}]$, i.e.,
between itself and its right neighbor; and at a random point 
$l_k^{(t)}$ uniformly in the interval $[x_{k-1}^{(t)}, x_k^{(t)}]$, i.e, between itself and its left neighbor (recall our convention that $x_0^{(t)} = 0, x_{n+1}^{(t)} = 1$). Finally, node $k$ samples the density at its own location $x_k^{(t)}$. After obtaining these samples node $k$ proceeds to set\footnote{Recall our notation: $\widehat \rho(z)$ is  $\rho(z)$ plus a zero-mean random variable with support in $[-M,M]$, and all these random variables are jointly independent.}
\begin{eqnarray*} R_k^{(t)} & = & \widehat \rho \left( r_k^{(t)} \right)   \left( x_{k+1}^{(t)} - x_k^{(t)} \right) \\ 
L_k^{(t)} & = & \widehat {\rho} \left( l_k^{(t)} \right) \left( x_{k}^{(t)} - x_{k-1}^{(t)} \right)
\end{eqnarray*}and then to update
\begin{eqnarray}  x_1^{(t+1)} & = & x_1^{(t)} - \frac{\alpha(t) \widehat \rho \left (x_1^{(t)} \right)}{8 \left( \rho_{\rm max} + M \right)^2} \left( 2L_1^{(t)} - R_1^{(t)} \right) \nonumber \\
x_k^{(t+1)}  & =&    x_k^{(t)} - \frac{\alpha(t) \widehat \rho \left( x_k^{(t)} \right) }{8 \left( \rho_{\rm max} + M \right)^2} \left( L_k^{(t)} - R_k^{(t)} \right) \\ && \mbox{ when } k = 2, \ldots, n-1 \nonumber \\ 
x_n^{(t+1)} & = & x_n^{(t)} - \frac{ \alpha(t) \widehat  \rho \left( x_n^{(t)} \right)}{8   \left( \rho_{\rm max} + M \right)^2} \left( L_n^{(t)} - 2 R_n^{(t)} \right) \label{mainupdate} \end{eqnarray} where 
$\alpha(t)$ is a  stepsize which satisfies \[ 0 \leq \alpha(t) \leq 1 \mbox{ for all } t, ~~~ \sum_{t=0}^\infty \alpha(t) = +\infty, ~~~ \sum_{t=0}^{\infty} \alpha^2(t) < \infty. \] We remark that choosing the stepsize $\alpha(t) = 1/t^{p}$ for any exponent $p \in (1/2,1]$ satisfies all three of 
these conditions. Finally, we will refer to this protocol as the {\em randomized scalar coverage protocol}.

\subsection{Main result}

The main result of this  paper is the following theorem.

\begin{thm} \label{mainthm} The positions $x^{(t)} = (x_1^{(t)}, \ldots, x_n^{(t)})^T$ 
converge to the unique minimizer configuration $x^* = (x_1^*, \ldots, x_n^*)^T$ of the coverage metric $\Phi$ with probability one. Moreover, if every node knows an
upper bound $U$ on the total number of nodes $n$ and chooses the stepsize  $\alpha(t)=\frac{8 U^2 (\rho_{\rm max}+M)^2}{ 8 U^2 (\rho_{\rm max}+M)^2+t}$ then 
we will have the expected error bound 
%\[ E \left[ Q(x^{(t)}) - Q(x^*) \right] \leq \frac{U^4}{U^2 + t} \max \left(16 ( \rho_{\rm max}+M)^4  (3 \rho_{\rm max}^2 + 2 ||\rho'||_{\infty} %\rho_{\rm max}), 2 \rho_{\rm %max}^2 \right). \] 
\begin{small} \begin{eqnarray*} E \left[ \frac{1}{n} ||x^{(t)} - x^*||_2^2 \right] & \leq & \frac{16 n U^4 ( \rho_{\rm max}+M)^4  (4 \rho_{\rm max}^2 + 2 ||\rho'||_{\infty} \rho_{\rm max})}{8 U^2 (\rho_{\rm max}+M)^2+ t}, \end{eqnarray*} \end{small} where $||\rho'||_{\infty} = \sup_{z \in [0,1]} |\rho'(z)|$.

\end{thm}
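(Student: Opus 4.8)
The plan is to show that the protocol \eqref{mainupdate} is exactly stochastic gradient descent, with an unbiased gradient estimate, on the Lyapunov function
$$ V(x) \eqdef d_\rho(0,x_1)^2 + \tfrac12\sum_{k=2}^{n} d_\rho(x_{k-1},x_k)^2 + d_\rho(x_n,1)^2, $$
and then to run standard stochastic-approximation estimates. Three preliminary facts are needed. (i) \emph{Unbiasedness}: since $r_k^{(t)}$ is uniform on $[x_k^{(t)},x_{k+1}^{(t)}]$ and the additive noise has mean zero and is independent of everything else, $E[R_k^{(t)}\mid x^{(t)}]=d_\rho(x_k^{(t)},x_{k+1}^{(t)})$ and $E[L_k^{(t)}\mid x^{(t)}]=d_\rho(x_{k-1}^{(t)},x_k^{(t)})$; moreover $\widehat\rho(x_k^{(t)})$ is independent of $(L_k^{(t)},R_k^{(t)})$ given $x^{(t)}$, and since $\partial d_\rho(0,x_k)/\partial x_k=\rho(x_k)$ this factor is precisely the Jacobian that makes $E[g^{(t)}\mid x^{(t)}]=\nabla V(x^{(t)})$ (the factors of $2$ at $k=1,n$ match the weight-$2$ boundary terms of $V$). (ii) \emph{Bounded second moment}: using $\widehat\rho\le\rho_{\rm max}+M$, $L_k^{(t)},R_k^{(t)}\ge0$, the bound $2(L_k^{(t)}+R_k^{(t)})$ on each bracket, and $\sum_k(x_k-x_{k-1})^2,\ \sum_k(x_{k+1}-x_k)^2\le1$, one gets $\|g^{(t)}\|_2^2\le16(\rho_{\rm max}+M)^4$ almost surely. (iii) \emph{Invariance}: because $0\le\alpha(t)\le1$ and of the constant $\tfrac18$ and the boundary factors, each agent moves by at most a fixed fraction of its distance to the relevant neighbour, so $0$ and $1$ are never crossed and the ordering is preserved; the iterates stay forever in the compact convex set $\Delta=\{0\le x_1\le\cdots\le x_n\le1\}$.

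The hard part will be that $V$ is \emph{not} convex in $x$ (when $\rho'<0$ the map $x_k\mapsto d_\rho(0,x_k)^2$ can be concave), so one cannot invoke convexity directly. I would resolve this with the change of variable $y_k\eqdef d_\rho(0,x_k)$, a diffeomorphism of $\Delta$ onto $\{0\le y_1\le\cdots\le y_n\le T\}$ with $T\eqdef d_\rho(0,1)$: in these coordinates $V$ is the honest quadratic $y_1^2+\tfrac12\sum_{k=2}^n(y_k-y_{k-1})^2+(T-y_n)^2$, whose Hessian $A$ is tridiagonal with diagonal $(3,2,\dots,2,3)$ and $-1$ off-diagonal. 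Gershgorin gives $\|A\|\le4$, while $A\succeq L_{\rm Dir}$ (the Dirichlet path Laplacian) gives $\lambda_{\min}(A)\ge\lambda_{\min}(L_{\rm Dir})=4\sin^2\!\tfrac{\pi}{2(n+1)}\ge1/n^2$. Setting $\nabla_yV=0$ recovers the conditions $d_\rho(0,x_1)=\tfrac12 d_\rho(x_1,x_2)=\cdots=\tfrac12 d_\rho(x_{n-1},x_n)=d_\rho(x_n,1)$, which are exactly the optimality conditions for $\Phi$, so the unconstrained minimizer $y^*$ of this quadratic corresponds to $x^*$ and $V(x)-V(x^*)=\tfrac12(y-y^*)^TA(y-y^*)$. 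Since $\rho\ge1$ we have $\|\nabla_xV(x)\|_2\ge\|\nabla_yV(y)\|_2$ and $\|x-x^*\|_2\le\|y-y^*\|_2$, which together with $A^2\succeq\lambda_{\min}(A)A$ give the gradient-domination bound $\|\nabla_xV(x)\|_2^2\ge2\lambda_{\min}(A)\,(V(x)-V(x^*))$ and $\|x-x^*\|_2^2\le\frac{2}{\lambda_{\min}(A)}(V(x)-V(x^*))$. Finally $\nabla_x^2V={\rm diag}(\rho(x_k))\,A\,{\rm diag}(\rho(x_k))+{\rm diag}\big(\rho'(x_k)(A(y-y^*))_k\big)$, and since each entry $(A(y-y^*))_k$ is a $\pm$ combination of $d_\rho$-gap lengths and is bounded by $2\max_i d_\rho(x_{i-1},x_i)\le2\rho_{\rm max}$, we get the Lipschitz-gradient bound $\|\nabla_x^2V\|\le4\rho_{\rm max}^2+2\|\rho'\|_\infty\rho_{\rm max}=:L$ on $\Delta$.

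With these ingredients the two assertions follow from classical estimates. The $L$-smooth descent lemma on the convex set $\Delta$, conditioning, and (ii) give $E[V(x^{(t+1)})\mid x^{(t)}]\le V(x^{(t)})-\frac{\alpha(t)}{8(\rho_{\rm max}+M)^2}\|\nabla_xV(x^{(t)})\|_2^2+\frac{L}{8}\alpha^2(t)$. Since $\sum_t\alpha^2(t)<\infty$, the Robbins--Siegmund theorem shows $V(x^{(t)})$ converges a.s.\ and $\sum_t\alpha(t)\|\nabla_xV(x^{(t)})\|_2^2<\infty$ a.s.; with $\sum_t\alpha(t)=\infty$ and gradient domination this forces $V(x^{(t)})\to V(x^*)$ a.s., hence $\|x^{(t)}-x^*\|_2\to0$ a.s., the first claim. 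For the rate, apply gradient domination inside the displayed inequality and take full expectations: with $e_t\eqdef E[V(x^{(t)})-V(x^*)]$, $c_0\eqdef8U^2(\rho_{\rm max}+M)^2$ and the prescribed $\alpha(t)=c_0/(c_0+t)$ (so $\alpha(t)(c_0+t)=c_0$),
$$ e_{t+1}\le\Big(1-\frac{2\lambda_{\min}(A)U^2}{c_0+t}\Big)e_t+\frac{8LU^4(\rho_{\rm max}+M)^4}{(c_0+t)^2}. $$
Because $U\ge n$ and $\lambda_{\min}(A)\ge1/n^2$ the coefficient $\beta\eqdef2\lambda_{\min}(A)U^2\ge2$, and a routine induction on recursions $e_{t+1}\le(1-\tfrac{\beta}{c_0+t})e_t+\tfrac{D}{(c_0+t)^2}$ with $\beta\ge2$ gives $e_t\le\frac{\max(c_0e_0,\,D)}{c_0+t}$; here $D=8LU^4(\rho_{\rm max}+M)^4$ dominates $c_0e_0$ (as $e_0\le d_\rho(0,1)^2\le\rho_{\rm max}^2$), so $e_t\le D/(c_0+t)$. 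Using $\|x^{(t)}-x^*\|_2^2\le\frac{2}{\lambda_{\min}(A)}e_t$ and $\frac{2}{n\,\lambda_{\min}(A)}\le2n$ then yields $E[\frac1n\|x^{(t)}-x^*\|_2^2]\le\frac{16nLU^4(\rho_{\rm max}+M)^4}{c_0+t}$, which is exactly the stated bound with $L=4\rho_{\rm max}^2+2\|\rho'\|_\infty\rho_{\rm max}$.
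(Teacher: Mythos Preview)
Your proof is correct and follows essentially the same approach as the paper: the Lyapunov function $V$ is exactly half of the paper's $Q$, and the core idea---the change of variables $y_k=F(x_k)$ that renders the problem a strongly convex quadratic, followed by a gradient-domination (PL) inequality pushed back to $x$-coordinates via $\rho\ge1$, a Hessian bound on $\Delta$, and a one-step stochastic descent recursion solved by induction---is identical. The only differences are cosmetic: you bound $\lambda_{\min}(A)$ via the Dirichlet path Laplacian rather than the paper's Cauchy--Schwarz argument (Lemma~\ref{eiglower}), you obtain the Lipschitz constant $L$ from the factorization $\nabla_x^2V=\mathrm{diag}(\rho)A\,\mathrm{diag}(\rho)+\mathrm{diag}(\rho'\cdot A(y-y^*))$ rather than by directly computing partials and applying Gershgorin, and you invoke Robbins--Siegmund where the paper cites Polyak; the resulting constants and recursion are the same.
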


Under the assumption that every node approximately knows the total number of nodes, e.g., if we have $n \leq U \leq 2n$, an 
implication of this theorem is that
we need to wait $O(n^5/\epsilon)$ iterations until the average square error $E[\frac{1}{n} ||x^{(t)} - x^*||_2^2]$ is below $\epsilon$ in expectation, where the constant within the $O$-notation
depends on the  density $\rho$ and the noise support $M$. Thus the main result of this paper is that this decay is linear in the number of iterations $t$ and the 
constant in front of this decay scales polynomially with the number of nodes $n$. 

\subsection{Proofs}

We will shortly turn to the proof of Theorem \ref{mainthm}. Before doing so, however, we need to demonstrate something more basic: that the 
protocol preserves the ordering of the agents i.e., that $x_1^{(t)} \leq x_2^{(t)} \leq \cdots \leq x_n^{(t)}$ for all $t$. If this were not the case, our
protocol would not be truly {\em distributed}: since the next position of node $k$ is affected by the positions of nodes $k-1$ and $k+1$ at each step, it is crucial that 
these three nodes continue to be each other's closest neighbors.
% If this were not the case, then the protocol we have proposed could
%not be said to rely only on local measurements. 

\begin{prop} \label{order} Under the randomized scalar coverage protocol, we have that  with probability $1$
 $$0 = x_0^{(t)} \leq x_1^{(t)} \leq x_2^{(t)} \leq \cdots \leq x_n^{(t)} \leq x_{n+1}^{(t)} = 1$$ for all integers $t$. 
\end{prop}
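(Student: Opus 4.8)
The plan is to prove the chain of inequalities by induction on $t$. The base case $t=0$ holds by the labeling convention, and the endpoint conditions $x_0^{(t)} = 0$ and $x_{n+1}^{(t)} = 1$ hold by definition for all $t$, so it suffices to show that if $x_{k-1}^{(t)} \leq x_k^{(t)} \leq x_{k+1}^{(t)}$ for all $k = 1, \ldots, n$, then the same holds at time $t+1$. Fix an index $k$ and consider the gap $x_{k+1}^{(t+1)} - x_k^{(t+1)}$; I would write this out using the update rule \eqref{mainupdate}, substituting the definitions of $R_k^{(t)}$, $L_k^{(t)}$, etc. The key point is that $R_k^{(t)} = \widehat\rho(r_k^{(t)})(x_{k+1}^{(t)} - x_k^{(t)})$ and $L_{k+1}^{(t)} = \widehat\rho(l_{k+1}^{(t)})(x_{k+1}^{(t)} - x_k^{(t)})$ are both nonnegative multiples of the current gap $g_k := x_{k+1}^{(t)} - x_k^{(t)}$, and likewise $L_k^{(t)}$ and $R_{k+1}^{(t)}$ are nonnegative multiples of the neighboring gaps; all the sampled density values $\widehat\rho(\cdot)$ are nonnegative with probability one by assumption, and they are bounded above by $\rho_{\rm max} + M$.

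The heart of the argument is then an algebraic estimate showing $x_{k+1}^{(t+1)} - x_k^{(t+1)} \geq 0$. After substituting, the new gap has the form $g_k$ plus a linear combination of the gaps $g_{k-1}, g_k, g_{k+1}$ (with the convention that at the boundary indices the ``$2L$'' or ``$2R$'' terms appear), where each coefficient is $\pm \alpha(t)\widehat\rho(x_\cdot^{(t)})\widehat\rho(\cdot)/(8(\rho_{\rm max}+M)^2)$. I would collect the coefficient of $g_k$ and show it is nonnegative: it equals $1$ minus something of the form $\alpha(t)\widehat\rho(x_k^{(t)})\widehat\rho(r_k^{(t)})/(8(\rho_{\rm max}+M)^2) + \alpha(t)\widehat\rho(x_{k+1}^{(t)})\widehat\rho(l_{k+1}^{(t)})/(8(\rho_{\rm max}+M)^2)$ (the factor $2$ entering for $k=1$ or $k+1=n$), and since $\alpha(t) \leq 1$ and each $\widehat\rho \leq \rho_{\rm max}+M$, each such term is at most $1/8$ (or $1/4$ with the factor $2$), so the total subtracted quantity is at most $1/2 < 1$. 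The coefficients of $g_{k-1}$ and $g_{k+1}$ are nonnegative by inspection (they come with a $+$ sign after the subtraction in the update). Since all gaps $g_{k-1}, g_k, g_{k+1}$ are nonnegative by the inductive hypothesis, the new gap is nonnegative.

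The main obstacle, and the place requiring care, is bookkeeping the boundary cases $k = 0$ (i.e., checking $x_1^{(t+1)} \geq x_0^{(t+1)} = 0$) and $k = n$ (checking $x_n^{(t+1)} \leq x_{n+1}^{(t+1)} = 1$), together with the adjacent interior gaps $x_2^{(t+1)} - x_1^{(t+1)}$ and $x_n^{(t+1)} - x_{n-1}^{(t+1)}$, because the first and last update equations carry the asymmetric coefficients $2L_1^{(t)}$ and $2R_n^{(t)}$. For the gap $x_1^{(t+1)} - 0$, I would use that $2L_1^{(t)} = 2\widehat\rho(l_1^{(t)})x_1^{(t)}$ so that $x_1^{(t+1)} = x_1^{(t)}(1 - \alpha(t)\widehat\rho(x_1^{(t)})\widehat\rho(l_1^{(t)})/(4(\rho_{\rm max}+M)^2)) + (\text{nonneg.\ multiple of } R_1^{(t)})$, and the coefficient in parentheses is at least $1 - 1/4 > 0$; the symmetric computation handles $x_n^{(t+1)} \leq 1$. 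These asymmetric factors are exactly why the denominator $8(\rho_{\rm max}+M)^2$ and the constraint $\alpha(t)\leq 1$ were chosen as they are — it is worth noting that the factor $2$ never pushes any coefficient negative precisely because $2 \cdot \tfrac14 = \tfrac12 < 1$. Once all $n+1$ gaps at time $t+1$ are shown nonnegative, the induction closes.
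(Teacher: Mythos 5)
Your proof is correct and follows essentially the same route as the paper's: induction on $t$, using that the samples $\widehat\rho$ are nonnegative and bounded by $\rho_{\rm max}+M$, that $\alpha(t)\le 1$, and that the $8(\rho_{\rm max}+M)^2$ denominator keeps each node's displacement a small fraction of the adjacent gaps. The only cosmetic difference is that you collect coefficients gap-by-gap (showing each new gap is a nonnegative combination of old gaps), whereas the paper bounds each node's movement by one quarter of the gap to its left and right neighbors and concludes the same way.
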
 
 
\begin{proof}[Proof of Proposition \ref{order}] By assumption the statement is true at time $t=0$, and we prove
it by induction. Suppose that the statement holds at time $t$ and consider node $i$. As a consequence of the update rule Eq. (\ref{mainupdate}) and the
fact that $R_i^{(t)}, L_i^{(t)} \geq 0$, we have 
\begin{eqnarray*}x_i^{(t+1)} - x_i^{(t)}  & \leq  &  \frac{\alpha(t)}{8 ( \rho_{\rm max}+M)} 2 R_i^{(t)} \frac{\widehat \rho \left( x_i^{(t)} \right) } { \rho_{\rm max} + M}    \\
& \leq & \frac{R_i(t)}{4 ( \rho_{\rm max}+M)} \\  
%& \leq & \frac{ (\rho_{\rm max}+M) (x_{i+1}^{(t)} - x_i^{(t)}) }{4 ( \rho_{\rm max}+M)} \\
& \leq & \frac{ x_{i+1}^{(t)} - x_i^{(t)}}{4}.  \end{eqnarray*}   A similar argument establishes that 
\[  x_{i}^{(t+1)} - x_{i}^{(t)}  \geq \frac{1}{4}  \left( x_{i-1}^{(t)} - x_{i}^{(t)} \right)  \]   and these two inequalities imply the proposition. 
\end{proof}

Now that we have established that the protocol remains distributed by preserving the ordering of the nodes, we turn to the proof of Theorem \ref{mainthm}. First 
we will argue that the optimal coverage point is the minimum of a certain Lyapunov function; this is Lemma \ref{optlemma} below, which defines a function 
$Q(x_1, \ldots, x_n)$ minimized at optimal coverage. Next, we bound how much $Q(x_1, \ldots, x_n)$ decreases at each step. This appears to be difficult to do directly. 
However, relying on the key idea that $Q$ becomes convex after a position-dependent stretching of the coordinate space, we will
prove a number of inequalities which will allow us to argue that $Q$ decreases at every step by a certain fraction of the full distance to the optimal value. 

We begin now with a series of lemmas executing this plan which will culminate in the proof of Theorem \ref{mainthm}. Our first lemma introduces the 
Lyapunov function $Q$:

\begin{lem} \label{optlemma} Define  \fn 
\begin{eqnarray*} Q(x_1, \ldots, x_n) & = & 2 \left( \int_0^{x_1} \rho(z) ~dz \right)^2 + \left( \int_{x_1}^{x_2} ~\rho(z) dz \right)^2 + \\ && \cdots + \left( \int_{x_{n-1}}^{x_n} \rho(z) ~ dz \right)^2 + 2 \left( \int_{x_n}^1 \rho(z) ~ dz \right)^2.  \end{eqnarray*} \en
Then $Q(x_1, \ldots, x_n)$ has a unique global minimizer $x^* = (x_1^*, x_2^*, \ldots, x_n^*)^T$; this minimizer $x^*$ is also a minimizer
of $\Phi$ and satisfies $0 \leq x_1^* \leq x_2^* \leq \cdots \leq x_n^* \leq 1$. 
\end{lem}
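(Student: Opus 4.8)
The plan is to remove the nonlinearity of the integrals by a position-dependent change of coordinates, after which both $Q$ and (the relevant restriction of) $\Phi$ become transparent weighted minimization problems. Set $y_i = \int_0^{x_i}\rho(z)\,dz$ for $i=1,\dots,n$, and put $y_0=0$, $y_{n+1}=L:=\int_0^1\rho(z)\,dz$. Since $\rho$ is continuous and bounded away from zero, $x\mapsto\int_0^x\rho$ is a strictly increasing $C^1$ bijection of $\R$ onto $\R$, so this substitution is an increasing bijection between the $x$- and $y$-coordinates, and a configuration is nondecreasing in $x$ iff it is nondecreasing in $y$. Writing $d_i=y_{i+1}-y_i$ for $i=0,\dots,n$ (the $\rho$-lengths of the Voronoi-type intervals), we have $\sum_{i=0}^n d_i=L$, and $Q$ becomes the weighted sum of squares $Q=\sum_{i=0}^n c_i d_i^2$ with weights $c_0=c_n=2$ and $c_1=\dots=c_{n-1}=1$. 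The map $(y_1,\dots,y_n)\mapsto(d_0,\dots,d_{n-1})$ is a linear bijection (with $d_n=L-\sum_{i<n}d_i$ then determined), so minimizing $Q$ over $x$ is the same as minimizing $\sum_{i=0}^n c_i d_i^2$ subject only to $\sum_i d_i=L$.

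The next step is a one-line Cauchy--Schwarz estimate: applied to the vectors $(\sqrt{c_i}\,d_i)_i$ and $(1/\sqrt{c_i})_i$ it gives $L^2=\big(\sum_i d_i\big)^2\le Q\cdot\sum_i c_i^{-1}=nQ$, with equality if and only if $c_i d_i$ is independent of $i$. Hence $Q$ has the unique minimizer characterized by $c_i d_i\equiv L/n$, i.e. $d_0^*=d_n^*=L/(2n)$ and $d_i^*=L/n$ for $1\le i\le n-1$; equivalently $y_i^*=(2i-1)L/(2n)$, and $x^*$ is the (unique) preimage of $y^*$ under the coordinate change. Since $y^*$ is strictly increasing with entries in $(0,L)$, the configuration $x^*$ is strictly increasing with entries in $(0,1)$, which yields the ordering claim $0\le x_1^*\le\cdots\le x_n^*\le 1$. (Everything is consistent whether $Q$ is regarded as a function on $[0,1]^n$ or on all of $\R^n$, since the minimizer lies in the open cube.)

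It remains to identify $x^*$ with a minimizer of $\Phi$. For a nondecreasing configuration, the point of $[0,1]$ farthest from its nearest agent is either $0$ (at $\rho$-distance $y_1=d_0$ from $x_1$), or $1$ (at $\rho$-distance $L-y_n=d_n$ from $x_n$), or the $\rho$-midpoint of some interior gap $[x_i,x_{i+1}]$ (at $\rho$-distance $\tfrac12 d_i$); therefore $\Phi(x)=\tfrac12\max_{0\le i\le n}c_i d_i$ with the same weights $c_i$. The identity $L=\sum_i d_i=\sum_i (c_i d_i)/c_i\le\big(\max_i c_i d_i\big)\sum_i c_i^{-1}=n\max_i c_i d_i$ shows that, over nondecreasing configurations, $\Phi$ is minimized exactly when $c_i d_i\equiv L/n$ --- the same $d^*$, hence the same $x^*$. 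Combining this with the fact (recalled earlier from \cite{LOpaper}) that $\Phi$ has a unique minimizer among nondecreasing vectors, and noting that $\Phi$ is permutation-invariant in the $x_i$ so that this nondecreasing minimizer is a global one, we conclude that $x^*$ minimizes $\Phi$.

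I expect the only step requiring genuine care to be the third one: making the formula $\Phi(x)=\tfrac12\max_i c_i d_i$ precise requires checking that, along a nondecreasing configuration with all agents in $[0,1]$, the inner maximization $\max_{y\in[0,1]}\min_i d_\rho(y,x_i)$ really does decompose over the two end intervals $[0,x_1]$, $[x_n,1]$ and the interior gaps $[x_i,x_{i+1}]$ exactly as claimed, with the maximum on each interior gap attained at its $\rho$-midpoint. The remaining ingredients --- the change of variables and the two weighted Cauchy--Schwarz / min--max computations --- are routine once this normal form is in hand.
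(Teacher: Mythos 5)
Your proof is correct, but it takes a genuinely different route from the paper's. The paper's own proof is short and leans on an external result: it establishes existence of a global minimizer of $Q$ by coercivity, writes down the first-order conditions (which say that the $\rho$-distances between consecutive agents are all equal and are twice the two boundary distances), and then invokes Lemma 2 of \cite{LOpaper} for the facts that these equations have a unique solution, that this solution minimizes $\Phi$, and that its entries are monotone and lie in $[0,1]$. You instead give a self-contained argument: the substitution $y_i=F(x_i)$ turns $Q$ into the weighted sum $\sum_i c_i d_i^2$ of squared gaps subject to the single linear constraint $\sum_i d_i = L$ with $L=\int_0^1\rho(z)\,dz$, and a weighted Cauchy--Schwarz inequality (using $\sum_i c_i^{-1}=n$) identifies the unique minimizer $c_i d_i \equiv L/n$ explicitly; a parallel max--min computation shows the same gap vector uniquely minimizes $\Phi$ over ordered configurations. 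What your route buys is independence from \cite{LOpaper} together with closed forms for the optimum, namely $F(x_i^*)=(2i-1)L/(2n)$, $Q^*=L^2/n$ and $\Phi^*=L/(2n)$; what the paper's route buys is brevity and reuse of an already-proved characterization of optimal coverage. The change of variables you use is, incidentally, exactly the device the paper deploys later (the function $G$ in the proof of Lemma \ref{Qbound}) to obtain strong convexity, so your argument fits naturally into the paper's framework.

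Two small points are worth tightening. First, as you yourself flag, the identity $\Phi(x)=\tfrac12\max_i c_i d_i$ requires the (elementary, piecewise-linear) verification that $u\mapsto\min_i|u-y_i|$ attains its maximum over $[0,L]$ either at an endpoint or at the midpoint of a gap between consecutive $y_i$; this is standard but should be stated. Second, your appeal to \cite{LOpaper} for uniqueness of the nondecreasing minimizer of $\Phi$ is unnecessary: your own inequality $L=\sum_i (c_i d_i)/c_i\le n\max_i c_i d_i$, together with its equality analysis, already yields that uniqueness, so the whole lemma can stand without any external citation.
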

\begin{proof} It is easy to see that $Q(x_1, \ldots, x_n)$ must have a global minimum, since it is continuous and blows up if at least one of the variables goes
off to infinity.  Setting all partial derivatives of $Q$ to zero and rearranging, we obtain the set of equations 
\begin{eqnarray} 2 \int_0^{x_1} \rho(z) ~ dz & = & \int_{x_1}^{x_2} \rho(z) ~ dz \nonumber \\
\int_{x_{i-1}}^{x_{i}} \rho(z) ~ dz & = & \int_{x_{i}}^{x_{i+1}} \rho(z) ~ dz \mbox{ for } i = 2, \ldots, n-1 \nonumber \\
\int_{x_{n-1}}^{x_n} \rho(z) ~ dz & = & 2 \int_{x_n}^1 \rho(z) dz. \label{prop}
\end{eqnarray}  Any global minimum of $Q$ must satisfy these equations. It is in Lemma 2 in \cite{LOpaper} that these equations  have a unique solution which minimizes $\Phi$, and the proof of that lemma  establishes that the entries of this solution are monotonic and lie in $[0,1]$.  
\end{proof}

%The proof of Lemma \ref{optlemma} can serve as an intuition for our protocol, as Eq. (\ref{prop}) suggests that the optimal coverage point is characterized %by 
%certain proportions between neighboring $\rho$-weighted distances. Our coverage protocol may be viewed as a distributed attempt by the nodes to achieve 
%these proportions. 

 For convenience of notation, we will define $F$ to be  the function which 
maps $[0,1]$ into $[0, \rho_{\rm max}]$ by $F(x) = \int_0^x \rho(z) ~dz$. Note that Eq. (\ref{prop}) can be conveniently restated in terms of $F$, e.g., the equality for $i=2, \ldots, n-1$ in Eq. (\ref{prop}) is simply $F(x_i) - F(x_{i-1}) = F(x_{i+1}) - F(x_i)$.

 Next, we will need a technical estimate on the smallest value assumed by a certain quadratic form on the unit sphere. 
%We will use this later when we will argue that
%a ``stretched'' version of $Q$ is convex. 

\begin{lem} \label{eiglower}
 \[ \min_{||x||_2 = 1}  x_1^2 + (x_2 - x_1)^2 + \cdots + (x_n - x_{n-1})^2 + x_n^2 \geq \frac{1}{n^2} \]
\end{lem}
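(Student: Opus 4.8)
\textbf{Proposal for the proof of Lemma \ref{eiglower}.}
The plan is to reinterpret the quadratic form using boundary conventions and then bound it below by a direct telescoping-plus-Cauchy-Schwarz estimate, avoiding any explicit eigenvalue computation. Introduce the dummy variables $x_0 = 0$ and $x_{n+1} = 0$ (matching the convention already used in the paper for the endpoints of the interval). Then the quantity to be bounded, call it $S := x_1^2 + (x_2-x_1)^2 + \cdots + (x_n - x_{n-1})^2 + x_n^2$, is exactly $S = \sum_{i=1}^{n+1} (x_i - x_{i-1})^2$.

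The key step is the pointwise bound on each coordinate. For any $k \in \{1, \ldots, n\}$, write $x_k = x_k - x_0 = \sum_{i=1}^k (x_i - x_{i-1})$, and apply the Cauchy--Schwarz inequality to get
\[
x_k^2 \;=\; \left( \sum_{i=1}^k (x_i - x_{i-1}) \right)^2 \;\leq\; k \sum_{i=1}^k (x_i - x_{i-1})^2 \;\leq\; n \, S,
\]
where the last inequality uses $k \leq n$ and the fact that a partial sum of the nonnegative terms $(x_i - x_{i-1})^2$ is at most the full sum $S$. Summing this over $k = 1, \ldots, n$ and using $\|x\|_2 = 1$ yields $1 = \sum_{k=1}^n x_k^2 \leq n^2 S$, which is precisely the claimed bound $S \geq 1/n^2$.

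I do not expect a genuine obstacle here; the only thing to be careful about is making sure the boundary terms $x_1^2$ and $x_n^2$ are correctly absorbed into the telescoping sum via $x_0 = x_{n+1} = 0$, and that the Cauchy--Schwarz application is in the form $(\sum_{i=1}^k 1 \cdot a_i)^2 \le k \sum_{i=1}^k a_i^2$. One could alternatively observe that the form is $x^\top (B^\top B) x$ for the $(n+1)\times n$ first-difference matrix $B$ and try to estimate the smallest singular value of $B$ directly (its eigenvalues are known explicitly), but the elementary argument above is shorter and already gives a bound of the correct order $\Theta(1/n^2)$, which is all that is needed downstream.
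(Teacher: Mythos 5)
Your proof is correct and takes essentially the same route as the paper's: both telescope $x_k = \sum_{i=1}^k (x_i - x_{i-1})$ from the boundary and apply Cauchy--Schwarz to bound $x_k^2$ by at most $n$ times the sum of squared differences. The only (immaterial) difference is that the paper applies this to a single coordinate satisfying $|x_i| \geq 1/\sqrt{n}$ (which exists by pigeonhole since $\|x\|_2 = 1$), whereas you sum the bound over all $n$ coordinates; both finish with $S \geq 1/n^2$.
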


\begin{proof} Since $||x||_2=1$ we have that at least one component $x_i$ satisfies $|x_i| \geq 1/\sqrt{n}$. Without loss of generality, let us 
assume $x_i > 0$; else, we can simply replace $x$ with $-x$. We then have that \[  \frac{1}{\sqrt{n}}  \leq  x_i 
 =   (x_1 - 0) + (x_2 - x_1) + \cdots + (x_i - x_{i-1}). \]   Applying Cauchy-Schwarz yields the lemma. 
%, 
%\[ \frac{1}{n} \leq i \left( x_1^2 + (x_2 - x_1)^2 + \cdots + (x_i - x_{i-1})^2 \right). \]  Since $i \leq n$ the lemma follows.   
\end{proof} 

Next we prove a technical lemma which lower bounds the size of the gradient of $Q$ at each step as a fraction of the distance to the optimal
value. The proof uses the previous Lemma \ref{eiglower} and proceeds by relying on a coordinate-depending stretching of the space making
$Q$ convex. 

\begin{lem} \[ \frac{ || Q'(x)||_2^2 }{Q(x) - Q(x^*)} \geq \frac{4}{n^2} \] \label{Qbound} \end{lem}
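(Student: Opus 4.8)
The plan is to exploit the "stretching" idea already announced: introduce the change of coordinates $u_i = F(x_i)$, so that $u_0 = 0$, $u_{n+1} = \rho_{\max}$ (writing $F(1) = \rho_{\max}$ for brevity), and
\[
Q = 2u_1^2 + (u_2 - u_1)^2 + \cdots + (u_n - u_{n-1})^2 + 2(\rho_{\max} - u_n)^2 =: \widetilde Q(u).
\]
In the $u$-coordinates $\widetilde Q$ is a convex quadratic, and its minimizer $u^* = F(x^*)$ (componentwise) is exactly the solution of the first-order conditions in Eq.~(\ref{prop}). The first step is to compute $\nabla \widetilde Q$ and recognize its Hessian: $\widetilde Q(u) - \widetilde Q(u^*) = (u - u^*)^T A (u - u^*)$ where $A$ is the tridiagonal matrix with $2$'s on the off-diagonal-adjacent entries and diagonal $(3,2,2,\ldots,2,3)$ — equivalently $A$ is (twice) the quadratic form appearing in Lemma~\ref{eiglower} after absorbing the "boundary" terms; in any case $A \succeq 2 B$ where $B$ is the matrix of the form in Lemma~\ref{eiglower}, so $\lambda_{\min}(A) \geq 2/n^2$ by that lemma. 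Since $\widetilde Q$ is a convex quadratic, the standard gradient inequality gives $\|\nabla \widetilde Q(u)\|_2^2 \geq 2\lambda_{\min}(A)\,(\widetilde Q(u) - \widetilde Q(u^*)) \geq (4/n^2)(\widetilde Q(u) - \widetilde Q(u^*))$.

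The remaining step is to transfer this bound back to the $x$-coordinates. By the chain rule, $\partial Q/\partial x_i = \rho(x_i)\,\partial \widetilde Q/\partial u_i$, so $\|Q'(x)\|_2^2 = \sum_i \rho(x_i)^2 (\partial \widetilde Q/\partial u_i)^2 \geq \sum_i (\partial \widetilde Q/\partial u_i)^2 = \|\nabla \widetilde Q(u)\|_2^2$, using $\rho \geq 1$. Combining with the previous paragraph and the identity $Q(x) - Q(x^*) = \widetilde Q(u) - \widetilde Q(u^*)$ yields
\[
\frac{\|Q'(x)\|_2^2}{Q(x) - Q(x^*)} \;\geq\; \frac{\|\nabla \widetilde Q(u)\|_2^2}{\widetilde Q(u) - \widetilde Q(u^*)} \;\geq\; \frac{4}{n^2},
\]
which is the claim.

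I expect the main obstacle to be the bookkeeping around the boundary terms and the factor-of-two weights on the end intervals: one must be careful that the Hessian $A$ in the $u$-coordinates really does dominate $2B$ for the exact matrix $B$ used in Lemma~\ref{eiglower} (the $3$'s on the diagonal of $A$ versus the $2$'s in $B$ only help, so this should go through, but it needs to be checked rather than asserted). A secondary technical point is confirming that $u^* = F(x^*)$ is genuinely the unconstrained minimizer of $\widetilde Q$ over $\mathbb{R}^n$ — this follows because the first-order conditions Eq.~(\ref{prop}) are precisely $\nabla\widetilde Q(u^*) = 0$ and $\widetilde Q$ is strictly convex, so $x^*$ feasible and satisfying Eq.~(\ref{prop}) forces $u^*$ to be the global min — but it is worth stating explicitly so that the gradient inequality for convex quadratics can be invoked cleanly.
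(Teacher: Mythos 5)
Your proposal is essentially the paper's own argument: pass to the coordinates $u_i=F(x_i)$, observe that $Q$ becomes the convex quadratic $G$, lower-bound the smallest Hessian eigenvalue via Lemma~\ref{eiglower}, invoke the strong-convexity gradient inequality, and transfer back using $\rho\geq 1$ through the chain rule $\partial Q/\partial x_i=\rho(x_i)\,\partial G/\partial u_i$. The one place your bookkeeping goes wrong is exactly the place you flagged: if $A$ is defined by $\widetilde Q(u)-\widetilde Q(u^*)=(u-u^*)^TA(u-u^*)$, then $A$ is \emph{half} the Hessian, with diagonal $(3,2,\dots,2,3)$ and off-diagonal entries $-1$ (not $2$), and the claim $A\succeq 2B$ is false (for $n=2$, $A-2B$ has a negative eigenvalue); only $A\succeq B$ holds, giving $\lambda_{\min}(A)\geq 1/n^2$. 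The correct accounting applies the gradient inequality with the strong-convexity modulus $\lambda_{\min}(\nabla^2\widetilde Q)=2\lambda_{\min}(A)\geq 2/n^2$, so $\|\nabla\widetilde Q\|_2^2\geq 2\cdot(2/n^2)\,(\widetilde Q-\widetilde Q^*)$; the two factors of two you misplaced cancel and the final constant $4/n^2$ is recovered, so the argument is sound once this is repaired.
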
 \begin{proof} Define \fn \[ G(y_1, \ldots, y_n) = 2 y_1^2 + (y_2-y_1)^2 + \cdots + (y_n - y_{n-1})^2 + 2 \left(\int_0^1 \rho(z) ~ dz - y_n \right)^2. \] \en
Observe that \begin{equation} \label{qgrelation} Q(x_1, \ldots, x_n) = G(F(x_1), F(x_2), \ldots, F(x_n)). \end{equation} As a consequence of this and Lemma \ref{optlemma}, we can conclude that
$G(y)$ has a unique minimum $y^*$ satisfying $y_i^* = \int_0^{x_i^*} \rho(z) ~ dz$. The Hessian of $G(y_1, \ldots, y_n)$ is easily computed; it is \[ G''(y) = \begin{pmatrix} 6 & -2&  0 & 0 & \cdots & 0 \\ 
-2 & 4 & -2 & 0 & \cdots & 0 \\
0 & -2 & 4 & -2 & \cdots & 0 \\
\vdots & \vdots & \ddots & \ddots & \ddots & \vdots \\
0 & 0 & 0 & -2 & 4 & -2  \\
0 & 0 & 0 & 0 & -2 & 6
\end{pmatrix} \] We will use the standard notation of ${\bf e}_i$ to mean the column vector with a $1$ in the $i$'th component and zero elsewhere; moreover,
we will use ${\bf e}_{i,j}$  to denote the vector with a $1$ in the $i$'th component, a $-1$ in the $j$'th component, and zeros elsewhere. Then
it is easy to verify that \vskip -0.6pc 
\[ G'' = 4 {\bf e}_1 {\bf e}_1^T + 2 \sum_{i=1}^{n-1} {\bf e}_{i,i+1} {\bf e}_{i,i+1}^T + 4 {\bf e}_n {\bf e}_{n}^T \] so that its smallest 
eigenvalue satisfies \begin{footnotesize}
\begin{equation} \label{lambdabound} \lambda_{\rm min} = \min_{||y||_2=1} y^T G''y =\min_{||y||_2=1}  4 y_1^2 + 2 \sum_{i=1}^{n-1} (y_i - y_{i+1})^2 + 4 y_n^2 \geq \frac{2}{n^2} \end{equation} \end{footnotesize} where the last inequality follows from 
Lemma \ref{eiglower}.  Thus $G$ is a strongly convex function, and a standard bound on the norm of its gradient (see Lemma 3 in Chapter 1.4 of \cite{Polyak})) is \[   \frac{||G'(y)||_2^2}{G(y) - G(y^*)} \geq 2 \lambda_{\rm min}. \] Further,  for any  $x \in \R^n$ choosing $y \in \R^n$ defined by $y_i = F(x_i)$, we have by Eq. (\ref{qgrelation}) and our assumption that
$\rho \geq 1$ everywhere, that
\[\frac{ || Q'(x)||_2^2 }{Q(x) - Q(x^*)}  \geq \frac{||G'(y)||_2^2}{G(y) - G(y^*)}.  \] Putting together the last two inequalities
and plugging in the bound on $\lambda_{\rm min}$ from Eq. (\ref{lambdabound}) we obtain the current lemma. 
\end{proof}

Having now established the lower bound on the norm of the gradient of $Q(x)$, we now proceed to the proof of Theorem \ref{mainthm}. We
proceed by arguing that our protocol is a randomized version of gradient descent on the function $Q(x)$, which can be shown to converge despite the
nonconvexity of $Q$. 
% nonconvexity . Usually gradient descent can be proven to converge to the global minimum from any starting point only
%under the assumption that the function in question is convex; however, it turns out that the lower bound we have shown in the previous lemma is sufficient %to demonstrate 
%such convergence, even though $Q(x)$ may not be convex. 

\smallskip

\begin{proof}[Proof of Theorem \ref{mainthm}]  We begin by rewriting the update equation in more convenient form. Specifically, comparing  the randomized control law, namely Eq. (\ref{mainupdate}), with the definition of the function $Q(x_1, \ldots, x_n)$, we observe that we can write \begin{small}
\[ x^{(t+1)} = x^{(t)} - \frac{\alpha^{(t)}}{16 (\rho_{\rm max}+M)^2} \overline{g}^{(t)}, \] \end{small} where $E[ \overline{g}(t) ]  =  Q' \left (x^{(t)}  \right)$ and \begin{small}
 \begin{eqnarray*}  \left| \left| \overline{g}^{(t)} \right| \right|_2^2  & = &   \left( \widehat \rho \left( x_1^{(t)} \right) ( 4 L_1^{(t)} - 2R_1^{(t)}) \right)^2  \\ && + \sum_{i=2}^{n-1}  \left( \widehat \rho \left( x_i^{(t)} \right) ( 2 L_i^{(t)} - 2 R_i^{(t)} ) \right) ^2   \\ && +   \left( \widehat \rho \left( x_n^{(t)} \right) ( 2L_n^{(t)} - 4R_n^{(t)}) \right)^2  \\
& \leq & (\rho_{\rm max}+M)^2 \sum_{i=1}^n \left( \max \left( 4 L_i^{(t)}, 4 R_i^{(t)} \right) \right)^2 \\
& \leq & 64 (\rho_{\rm max}+M)^4,  
\end{eqnarray*}   \end{small}
and all the above inequalities hold with probability $1$. Next, we observe that 
$Q$ is a twice differentiable function, so that we may expand it in a Taylor series. Since \begin{footnotesize}
\begin{eqnarray*}  \frac{ \partial^2 Q} {\partial x_1^2}(x)  & = &  6 \rho^2(x_1) + 2 \rho'(x_1) \left( 2 \int_{0}^{x_1} \rho(z) ~ dz  - \int_{x_1}^{x_{2}} \rho(z) ~ dz \right) \\  
 \frac{ \partial^2 Q} {\partial x_n^2}(x)  & = &  6 \rho^2(x_n) + 2 \rho'(x_n) \left(  \int_{x_{n-1}}^{x_n} \rho(z) ~ dz  - 2 \int_{x_n}^{1} \rho(z) ~ dz \right)  \\  
\frac{ \partial^2 Q} {\partial x_i^2}(x)  & = &  4 \rho^2(x_i) + 2 \rho'(x_i) \left( \int_{x_{i-1}}^{x_i} \rho(z) ~ dz  - \int_{x_i}^{x_{i+1}} \rho(z) ~ dz \right) \\ && \mbox{ when } i = 2, \ldots, {n-1} \\ 
\frac{ \partial^2 Q} {\partial x_i \partial x_j}(x)  & = &  
-2 \rho(x_i) \rho(x_j)  \mbox{  when } j \in \{i-1,i+1\} \\
\frac{\partial^2 Q}{\partial x_i \partial x_j}(x) & = & 0 \mbox{   when } j \notin \{i-1,i,i+1\} \end{eqnarray*} \end{footnotesize} by Gershgorin circles it follows that as long as $0 \leq x_1^{(t)} \leq \cdots \leq x_n^{(t)} \leq 1$, the largest eigenvalue of $Q''(x)$ is never more than $8 \rho_{\rm max}^2 + 4 || \rho'||_{\infty} \rho_{\rm max} $ in magnitude.  Thus as long as both $x^{(t)}$ and $x^{(t+1)}$ have entries between $0$ and $1$ and monotonically nondecreasing (note that by Proposition \ref{order} this holds at every time $t$),  we may bound $Q(x^{(t+1)})$ via the Taylor expansion with Lagrange remainder form as  \begin{eqnarray*} Q(x^{(t+1)}) &  \leq & Q(x^{(t)}) + \nabla Q(x^{(t)})^T (x^{(t+1)} - x^{(t)} ) \\ && + ( 4 \rho_{\rm max}^2 + 2 ||\rho'||_{\infty} \rho_{\rm max}) ||x^{(t+1)} - x^{(t)}||_2^2 \end{eqnarray*}  or \begin{eqnarray*}
Q(x^{(t+1)}) & \leq & Q(x^{(t)}) - \frac{\alpha(t)}{16 (\rho_{\rm max}+M)^2} \sum_{i=1}^n [Q'(x^{(t)})]_i \overline{g}_i^{(t)} \\ && + \frac{ (4 \rho_{\rm max}^2 + 2 ||\rho'||_{\infty} \rho_{\rm max}) \alpha^2(t)}{16^2( \rho_{\rm max} + M)^4} ||\overline{g}^{(t)}||_2^2. \end{eqnarray*}
Taking expectations  and
using Lemma \ref{Qbound}, \fn
\begin{eqnarray*} E \left[ Q(x^{(t+1)}) - Q(x^*) ~|~ x^{(t)} \right] & \leq & \left( 1- \frac{\alpha(t)}{4 n^2 (\rho_{\rm max}+M)^2}  \right) \left( Q(x^{(t)}) - Q(x^*) \right)  \\ && + \frac{ (4 \rho_{\rm max}^2 + 2 ||\rho'||_{\infty} \rho_{\rm max}) \alpha^2(t)}{4}. \end{eqnarray*} \en

 \begin{figure*}
\begin{center}$
\begin{array}{c}
\includegraphics[width=0.43\textwidth]{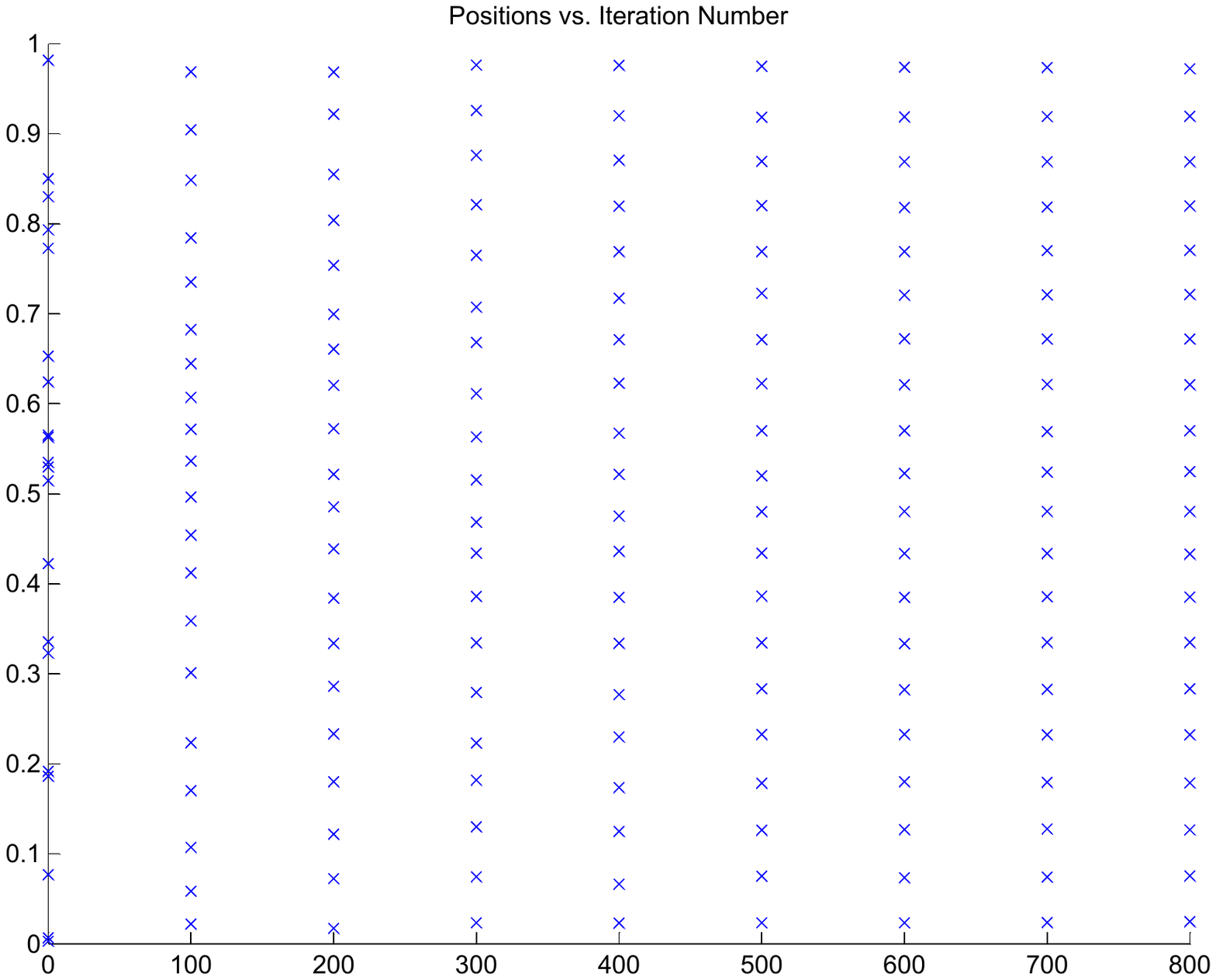} 
\includegraphics[width=0.43\textwidth]{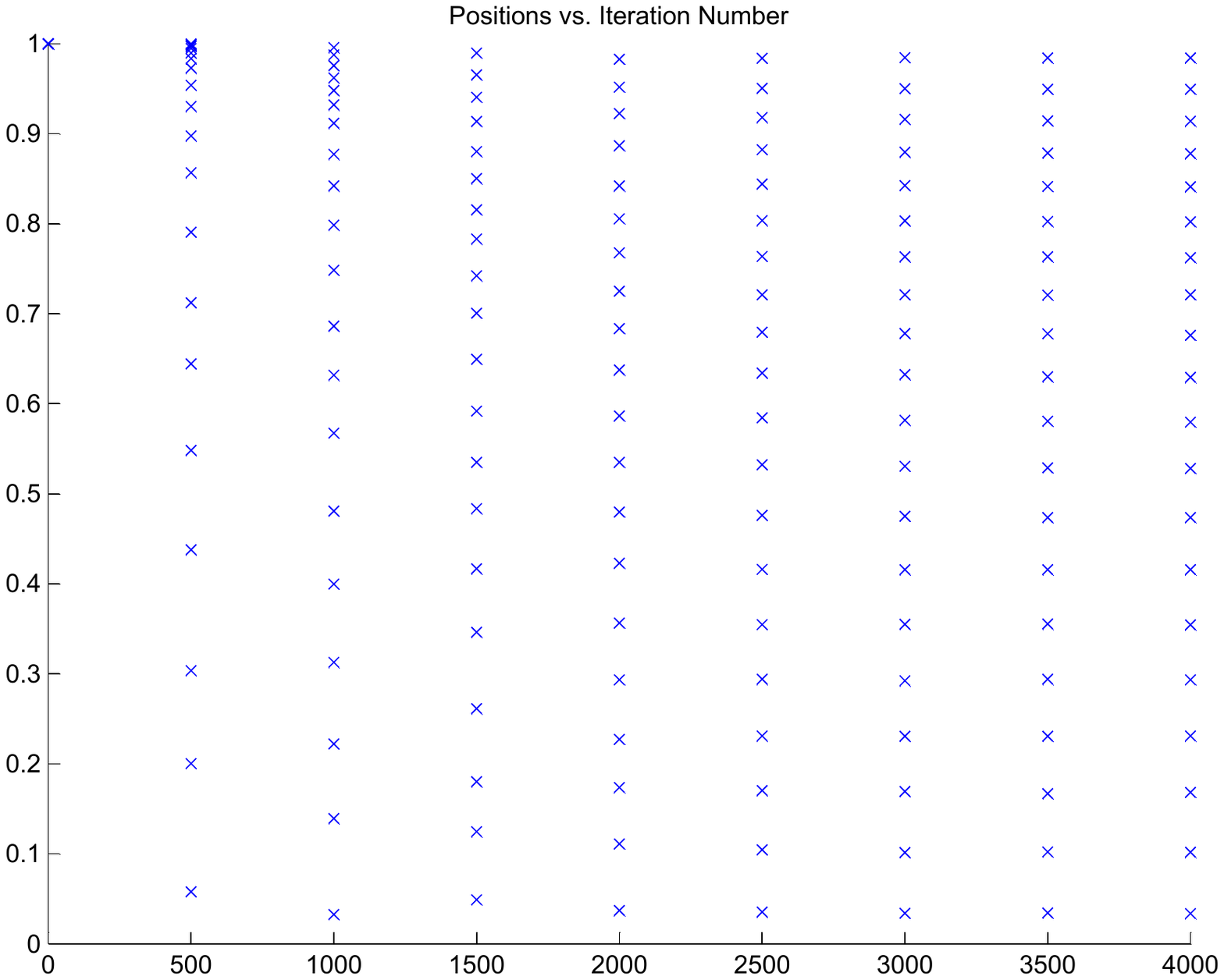}
\end{array}$
\end{center}
\caption{Both left and right figure show the positions of the nodes as a function of iteration number for a network of $20$ nodes. On the left the starting positions are uniformly random in $[0,1]$, while
on the right all nodes begin with $x_i(0)=1$.} \label{f1}
\end{figure*}
By Lemma 10  of Chapter 1.4 of \cite{Polyak} it follows that $Q(x^{(t)})-Q(x^*)$ approaches zero with
probability $1$, which by the uniqueness of the minimizer $x^*$ implies that $x^{(t)} \rightarrow x^*$ with probability $1$. Now under the additional assumption that every node knows an upper bound $U$ and 
chooses the stepsize of $\alpha(t) = \frac{8 U^2 ( \rho_{\rm max} + M)^2}{8 U^2 (\rho_{\rm max} + M)^2+t}$ we have that \begin{small}
\[  E \left[ Q(x^{(t+1)}) - Q(x^*) \right]   \leq \left( 1 - \frac{2}{8U^2 (\rho_{\rm max}+M)^2+t} \right) E \left[ Q(x^{(t)}) - Q(x^*) \right] \] \[ ~~~~~~~~~~~~~~~~~~~~~~~~~~~~~~~~~~~~~~~~~~~~~~~~~~+ \frac{16 U^4 ( \rho_{\rm max}+M)^4  (4 \rho_{\rm max}^2 + 2 ||\rho'||_{\infty} \rho_{\rm max})}{ (8 U^2 (\rho_{\rm max}+M)^2+t)^2}.  \]  \end{small} We now claim that for all $t \geq 0$, we have that \fn \begin{eqnarray*} E \left[ Q(x^{(t)}) - Q(x^*) \right] & \leq &  \frac{16 U^4 ( \rho_{\rm max}+M)^4  (4 \rho_{\rm max}^2      +2 ||\rho'||_{\infty} \rho_{\rm max})}{8 U^2 (\rho_{\rm max}+M)^2+ t} .  \end{eqnarray*} \en
We prove this claim by induction. Indeed, at $t=0$, since the initial positions are in the interval $[0,1]$, it is immediate that $Q(x(0)) \leq 2 \rho_{\rm max}^2$, which proves the statement at $t=0$.
%we have that 	\fn
%\begin{eqnarray*} Q(x(0)) & \leq &  \rho_{\rm max}^2 (2 x_1^2 + (x_2 - x_1)^2 + \cdots + (x_n - x_{n-1})^2 \\ && + 2(1-x_n)^2) \\ 
%& \leq & 2 \rho_{\max}^2  
%\end{eqnarray*} \en which proves the statement at $t=0$. 
Now suppose that the inequality holds at time $t$. For simplicity of notation, let us adopt the shorthands $C =16 U^4 ( \rho_{\rm max}+M)^4  (4 \rho_{\rm max}^2 + 2 ||\rho'||_{\infty} \rho_{\rm max})$ and $U' = 8 U^2 (\rho_{\rm max}+M)^2$. We then have that \begin{footnotesize} 
\begin{eqnarray*} E \left[ Q(x^{(t+1)} - Q(x^*) \right] & \leq & \left( 1 - \frac{2}{U'+t} \right) E \left[ Q(x^{(t)}) - Q(x^*) \right] \\ && + \frac{C}{ (U'+t)^2} \\ 
& \leq & \left( 1 - \frac{2}{U'+t} \right) \frac{C}{ (U' + t)}+ \frac{C}{ (U'+t)^2} \\
& \leq & C \left( \frac{1}{U'+t} -   \frac{1}{(U'+t)^2} \right) \\
& \leq & \frac{C}{ (U'+t+1)} 
\end{eqnarray*} \end{footnotesize} which proves the claim. Finally, we show  this implies the theorem. Observe that  $G(y) - G(y^*) \geq \frac{1}{n^2} || y - y^*||_2^2$ due to the fact that $\lambda_{\rm min}(G''(y)) \geq 2/n^2$ from Eq. (\ref{lambdabound}).  Thus, 
\[ Q(x) - Q(x^*)   \geq \frac{1}{n^2} ||F(x) - F(x^*)||_2^2 \geq \frac{1}{n^2} ||x-x^*||_2^2, \] where the last step 
follows from the fact that $\rho \geq 1$.
\end{proof} 

\section{Simulations} We briefly report on a simulation intended to gauge the practical convergence time of our protocol. Figure 1 shows simulation for a system of $20$ nodes for two starting points: one chosen uniformly at random and one which  initially places all the nodes at one corner. All noises are uniform in $[-1/2,1/2]$.  In both cases, the density is uniform, which allows convergence to the optimal configuration to be ``read off' from the graph by watching the spacings equalize. We chose the stepsize $\alpha(t)$ by setting $\alpha(t)=1$ for the first half of the iterations and setting $\alpha(t)=1/\sqrt{t}$ for the latter half; this decays more slowly as compared to the stepsize we used to obtain Theorem \ref{mainthm} but appears to be advantageous for a network of $20$ nodes executing several thousand iterations. \label{sec:simulations} 

Our simulations  confirm our theoretical convergence results. Furthermore, they suggest that our error bounds are conservative; indeed, the system 
of twenty nodes reaches close to the optimal configuration after thousands of iterations, while the upper bounds of Theorem \ref{mainthm}  are
several orders of magnitude larger. An open question is to obtain improved theoretical guarantees that better match the faster speed we observe.

\bibliographystyle{plain}
\bibliography{SeniorThesisBib}

\end{document}